\title*{$P_1$--nonconforming polyhedral finite elements in high
  dimensions
}
\titlerunning{$P_1$--nonconforming polyhedral finite elements in high dimensions}
\author{Dongwoo Sheen}
\institute{
\at Department of Mathematics, Seoul National
University, Seoul 08826, Korea. \email{sheen@snu.ac.kr.}\\
in press: 
in 2018 MATRIX Annals,
J. de Gier et al. (eds.), MATRIX Book Series 3,
pp. 121--133, 
\copyright Springer Nature Switzerland AG 2020.
\url{https://doi.org/10.1007/978-3-030-38230-8_9}
}
\newcommand{\lemref}[1]{Lemma~\ref{#1}}
\newcommand{\jump}[2]{\left[\left[{#1}\right]\right]_{#2}}
\def\hK{{\hat K}}
\def\hS{{\hat \Delta}}
\def\hQ{{\hat Q}}
\def\hP{{\hat P}}
\def\var{{\varphi}}
\def\Sig{{\Sigma}}
\def\hSig{{\hat\Sigma}}
\def\ol{\overline}
\newcommand{\eq}[1]{\begin{eqnarray}\label{#1}}
\newcommand{\qe}{\end{eqnarray}}
\newcommand{\be}{\begin{eqnarray}}
\newcommand{\ee}{\end{eqnarray}}
\newcommand{\bal}{\begin{aligned}}
\newcommand{\eal}{\end{aligned}}
\newcommand{\bes}{\begin{eqnarray*}}
\newcommand{\ees}{\end{eqnarray*}}
\newcommand{\bs}{\begin{subeqnarray}}
\newcommand{\es}{\end{subeqnarray}}
\newcommand{\bss}{\begin{subeqnarray*}}
\newcommand{\ess}{\end{subeqnarray*}}
\def\suchthat{\,\mid\,}
\def\forany{\,\,\forall}
\newcommand{\csum}[2]{{\sideset{}{^c}\sum_{#1}^{#2}}}
 \def\mbzero{\mathbf{0}}
 \def\lam{\lambda}
 \def\mba{\mathbf{a}}
\def\mbx{\mathbf{x}}
\def\hat{\widehat}
\def\q{\quad}
\def\qq{\qquad}
\def\hQ{\widehat Q}
\def\hxi{\widehat \xi}
\def\hx{\widehat x}
\def\hF{\widehat F}
\def\hve{\hat{\mathbf e}}
\def\bzero{{\mathbf 0}}
\def\hvar{\hat \varphi}
\def\conv{\operatorname{conv}}
\def\ext{\operatorname{ext}}
\def\diam{\operatorname{diam}}
\def\Span{\operatorname{Span}}
\def\cI{\mathcal I}
\def\hv{\widehat v}
\def\mbR{\mathbb R}
\def\p{\partial}
\def\O{\Omega}
\def\NC{\mathcal NC}
\def\NChz{{\mathcal NC}^h_0}
\def\NChz2d{{[\mathcal{NC}}^h_0]^2}
\def\tNChz2d{\widetilde {[\mathcal{NC}}^h_0]^2}
\def\G{\Gamma}
\def\and{\quad\text{and}\quad}
\def\<{\left\langle}
\def\>{\right\rangle}
\def\cM{\mathcal M}
\def\mbc{\mathbf c}
\def\mbA{\mathbf A}
\def\b1{\mathbf 1}
\def\bx{\mathbf x}
\def\Tau{{\mathcal T}}
\def\grad{\nabla\,}
\def\div{\nabla\cdot}
\def\dim{\operatorname{dim}\,}
\def\mbx{\mathbf x}
\newcolumntype{x}[1]{>{\centering\hspace{0pt}}p{#1}} 
\newcommand{\vertiii}[1]{{\left\vert\kern-0.25ex\left\vert\kern-0.25ex\left\vert #1 \right\vert\kern-0.25ex\right\vert\kern-0.25ex\right\vert}}
\begin{document}

\maketitle

\allowdisplaybreaks

\newif\iflong
\longfalse

\begin{abstract} \ We consider the lowest--degree nonconforming finite element
  methods for the approximation of elliptic problems in high dimensions.
The $P_1$--nonconforming polyhedral finite element is introduced for 
any high dimension. Our finite element is simple and cheap as it is based on
the triangulation of domains into parallelotopes,
which are combinatorially equivalent to $d$--dimensional cube,
rather than the triangulation of domains into simplices. Our
nonconforming element is nonparametric, and on each
parallelotope it contains only linear polynomials, but it is sufficient to
give optimal order convergence for second--order elliptic problems.
\end{abstract}

\section{Introduction}
We are interested in the lowest--degree nonconforming finite element
  methods for the approximation of elliptic problems in high
  dimensions.
Efficient numerical methods to approximate solutions of partial
differential equations in high dimensions are very demanding. For
instance, in computational finance, efficient numerical methods are necessary to approximate high
dimensional basket options (see
\cite{broadie2004stochastic, reisinger2007efficient,
  pettersson2008improved} and the references therein). Also, in the
approximation of the Einstein equations of general relativity, one needs to
work on high dimensional dynamics modeling (see
\cite{baumgarte2010numerical,el2005deriving,shapiro1992black}, and the
references therein).
For possible applications in fluid mechanics in high dimensions $\ge
4,$ see \cite{frehse1996existence, frehse1994onregularity,
  frehse1994regularity,frehse1995regular, struwe1995regular} and so
on for the uniqueness, existence and regularity results on the solution of
Navier--Stokes equations. However, practical application areas in
fluid mechanics are hardly found.

In high dimensions it is much simpler to to adopt cubic type of
elements rather than simplicial elements. In our paper we develop
finite elements based on
the triangulation of domains into parallelotopes,
which are combinatorially equivalent to $d$--dimensional cube. In
order to have lowest degree conforming finite elements on $d$--cubes,
one
needs to have multilinear polynomial spaces whose dimensions are at least $2^d.$
Hence to reduce the dimension of approximation polynomial space, we develop
nonconforming elements which are nonparametric, but on each
parallelotope it contains only $P_1$ polynomials which is sufficient to
give optimal order convergence for second--order elliptic problems.

To present most effectively the idea of developing the nonconforming
polyhedral finite elements, which are nonparametric, 
we briefly review the nonconforming elements of lowest degrees from
parametric elements to nonparametric elements,
and from rotated bilinear elements to $P_1$--nonconforming
quadrilateral elements. By this brief review it will be very natural
to expose our idea to develop the final nonconforming polyhedral
elements in high dimensions.

In this section we present our model problem, and then some
  notations and preliminaries are given.
  
  \subsection{The model problem}
  Let $\O\in \mbR^d$ be a simply--connected polyhedral domain with boundary $\G.$
Consider the second--order elliptic problem:
\begin{subeqnarray}\label{eq:ell-diri}
- \div \left(\mathbf A(\bx) \grad u\right) + c u &=& f, \quad \Omega,\\
u &= &0,\quad \G,
\end{subeqnarray}
where the uniformly positive--definite matrix--valued function $\mathbf A$
and the nonnegative function $c>0$ are assumed to be sufficiently smooth.
The weak formulation of \eqref{eq:ell-diri} is to find $u\in
H^1_{0}(\O)$ fulfilling
\begin{eqnarray}\label{eq:diri-weak}
  a(u,v) = \ell(v)\forany v\in H^1_{0}(\O),
\end{eqnarray}
where the bilinear form $a(\cdot,\cdot): H^1_{0}(\O)\times H^1_{0}(\O)\to
\mbR$ and the linear form $\ell: H^1_{0}(\O) \to \mbR$ are given by
\begin{subeqnarray}\label{eq:a-def}
  a(u,v) &=& (\mbA\grad u, \grad v) + (cu,v),\\
  \ell(v) &=& (f,v),
\end{subeqnarray}
for all $u,v\in H^1_0(\O).$

\subsection{Notations and preliminary results}
For be a domain $S\in\mbR^d,$ 
we adopt standard notations for Sobolev spaces with the inner products and norms
\begin{eqnarray*} 
&&L^2(S) = \{f:S\rightarrow\mathbb R \suchthat~ \int_S |f(\bx)|^2 \, dx < \infty\}, \\
&&\qquad(f, g)_S = \int_S f(x) g(x) \, dx;~ \|f\|_{0,S} = \sqrt{(f,f)}; \\
&&H^1(S) = \{f \in L^2(S) \suchthat~ \|\grad f(x)\|_{0,S}  < \infty\}, \\
&&\qquad\qquad (f, g)_{H^1(S)} = (f, g)_S + (\grad f, \grad g)_S;~ \|f\|_{1,S} =
\sqrt{(f,f)_{H^1(S)}};\\
&& H^1_0(S) = \{f \in H^1(S) \suchthat~ f\mid_{\p S}  = 0\};\\
&& H^k(S) = \{f \in L^2(S) \suchthat~ \| \p^\alpha f(\bx)\|_{0,S}  < \infty  \forany
  |\alpha|\le k  \}, \\
&&\qquad (f, g)_{H^k(S)} =\sum_{|\alpha| \le k}  (\p^\alpha f,
  \p^\alpha g)_S;~ \|f\|_{k,S} =
\sqrt{(f,f)_{H^k(S)}}.
\end{eqnarray*}
Here, and in what follows, if $S=\O$ the subindex $\O$ may be
dropped as well as the subindex $0.$

Denote by $\conv S$ the interior of the convex hull of $S,$ which is
an open set.
The $0$-- and $1$--faces of $d$--polyhedral domain $S$ are the vertices and
edges of $S,$ respectively.
In particular, the $(d-1)$--faces of $S$ will be called the
``facets'' of $d$--dimensional polyhedral domain, and by $\mu_j$ we
designate the barycenter of facet $F_j$'s.

\vspace{5mm}
The organization of the paper is as follows. In Section 2, the lowest--degree
parametric and nonparametric nonconforming quadrilateral elements for two and three
dimensions are briefly reviewed.
In Section 3, we introduce the nonparametric $P_1$--NC polyhedral finite
element space in $\mbR^d$ for any $d\ge 2.$ Here, and in what follows,
$P_1$ means ``piecewise linear'' and NC means ``nonconforming.''

\section{The parametric and nonparametric $P_1$--simplicial and quadrilateral nonconforming finite elements}
In this section we review the simplicial and
quadrilateral NC (nonconforming) finite element spaces in two and three
dimensions.

\subsection{The parametric simplicial and rectangular NC elements in two
  and three dimensions }
The NC elements for elliptic and Stokes equations in two
and three dimensions have been well known since the work of Crouzeix
and Raviart \cite{crouzeix-raviart} was published.

Denote the reference element as follows:
\begin{eqnarray}
  \hK =\begin{cases} \hS^d=
d\text{--simplex, \it{i.e.,} }\conv\{\bzero,\hve_1,\cdots, \hve_d\},\\
 \hQ^d=d\text{--cube, \it{i.e.,} } (-1,1)^d.
\end{cases}  
\end{eqnarray}

\begin{enumerate}
\item The lowest--degree simplicial Crouzeix-Raviart element (1973) \cite{crouzeix-raviart}:
  \begin{enumerate}
\item  $\hK=\hS^d,\, d=2,3;$
\item $\hP_\hK = P_1(\hK)=\Span \{1, \hx_1, \cdots, \hx_d\};$
\item $\hSig_\hK=\{\hvar(\hxi_j),\quad \hxi_j \mbox{ barycenter of facets},j=1,\cdots,d+1,\forany \hvar\in\hP(\hK)\}.$
\end{enumerate}
All odd--degree simplicial NC elements were introduced for
the Stokes problems in \cite{crouzeix-raviart}.

 \begin{remark} It is straightforward to define the simplicial NC
   elements on $d$--simplicial triangulation in any high
   dimension. However, for high dimension it is not easy to see how the
   $d$--simplices
   are packed in the domain. Thus the development of
   $d$--cubical elements is beneficial in this regard.
 \end{remark}
 
\item The Han rectangular element (1984) \cite{han84}:
    \begin{enumerate}
\item $\hK = \hQ^2;$
\item $\hP_\hK = P_1(\hK)\oplus \Span \{\hx_1^2-\frac53\hx_1^4,\hx_2^2-\frac53\hx_2^4\};$
\item $\hSig_\hK=\{\hvar(\hxi_j), \hxi_j, j=1,\cdots,4, \mbox{ midpoints of facets};
\int_{\hQ^2}\hvar \forany \hvar\in\hP_\hK\}.$
\end{enumerate}

\item The Rannacher--Turek rotated $Q_1$ element (1992, 
  \cite{rannacher-turek}, also Z. Chen \cite{chen-projection-93}):
  \begin{enumerate}
\item  $\hK = \hQ^d, d=2,3;$
\item $\hP_\hK = P_1(\hK)\oplus \Span \{\hx_1^2-\hx_d^2, \hx_{d-1}^2-\hx_d^2\};$
\item $\hSig_\hK^{(m)}= \{\hvar(\hxi_j),\q \hxi_j, j=1,\cdots,2d,
  \mbox{barycenters of facets } \hF_j, \forany\hvar\in\hP_\hK\};$ \newline
$\hSig_\hK^{(i)} = \{\frac1{|\hF_j|}\int_{\hF_j}\hvar d\sigma,\q \hF_j,j=1,\cdot,2d,
\mbox{are facets}, \forany\hvar\in\hP_\hK\}.$
\end{enumerate}
\begin{remark} The two DOFs generate two different NC elements, and
  for general quadrilateral meshes the NC element with the DOFs
  $\hSig_\hK^{(i)}$ gives optimal convergence rates while that with the DOFs
  $\hSig_\hK^{(m)}$ leads to suboptimal convergence rates.
\end{remark}

\item The DSSY element(DOUGLAS-SANTOS-Sheen-YE, 1999) \cite{dssy-nc-ell}:
  For $\ell=1,2,$ define
    \begin{eqnarray*}
 \theta_\ell(t) =\left\{
\begin{array}{ll}
    t^2, & \hbox{$\ell=0$;} \\
    t^2-\frac53t^4, & \hbox{$\ell=1$;} \\
    t^2-\frac{25}{6}t^4+\frac72t^6, & \hbox{$\ell=2$.}
  \end{array}
\right.
\end{eqnarray*}

    \begin{enumerate}
\item $\hK=\hQ^d, d=2,3;$
\item $\hP_\hK = P_1(\hK)\oplus \Span
  \{\theta_\ell(\hx_1)-\theta_\ell(\hx_d), \theta_\ell(\hx_{d-1})-\theta_\ell(\hx_d) \};$
  \item $\hSig_\hK^{(m)} = \{\hvar(\hxi_j),\hxi_j \mbox{ barycenters of
      facets}, j=1,\cdots,2d, \forany\hvar\in\hP_\hK\}$ \newline
    $\hSig_\hK^{(i)}=\{\frac1{|\hF_j|}\int_{\hF_j}\hvar d\sigma ,\, \hF_j, j=1,\cdots,2d, \mbox{ are facets}, \forany\hvar\in\hP_\hK\}.$
  \end{enumerate}
  \begin{remark} The benefit of the DSSY element is the Mean Value Property
    \begin{eqnarray}\label{eq:mvp}
      \hvar(\hxi_j) = \frac1{|\hF_j|}\int_{\hF_j}\hvar d\sigma
      \forany \hvar\in \hP_\hK
    \end{eqnarray}
    holds if $\ell=1,2.$ Thus, for $\ell=1,2,$ 
    the two DOFs $\hSig_\hK^{(m)}$ and
  $\hSig_\hK^{(i)}$ generate an identical NC elements
with optimal convergence rates. The case of $\ell=0$ reduces to
the Rannacher--Turek rotated $Q_1$ element.
\end{remark}

\item For truly quadrilateral triangulations,
  $P_1(\hK)$ for the Rannacher--Turek element and the DSSY element
  should be modified such that $P_1(\hK)$ is replaced by
  $Q_1(\hK)$ in
the reference elements with an additional DOF
$\int_{\hQ^2}\hvar(\hx_1,\hx_2)\hx_1\hx_2d\hx_1d\hx_2$
(Cai--Douglas--Santos--Sheen--Ye, CALCOLO, 2000) \cite{cdssy}.

 \end{enumerate}

Let $(\Tau_h)_{0<h<1}$ denote a family of quasiregular triangulations
of $\O$ into simplices or quadrilaterals $K_j$'s where $\diam(K_j)\leq h
\forany K_j\in \Tau_h$. 
If $K$ is a $d$--simplex, or a parallelogram or a parallelepiped,
there is a unique (up to rotation in the order of the vertices)
affine map $F_K\,:\,\hK\to K.$
Set
\[
\NC_K = \{v\,:\,v=\hv\circ F_K^{-1},\ \hv\in \hP_\hK\}.
\]
The global ({\it parametric}) NC element space is defined as follows:
\begin{eqnarray*} 
\NC^h&=&\big\{v\in L^2(\O)\suchthat v|_{K}\in\NC_K \forany K\in \Tau_h;
  \,\<[[v]]_F, 1\>_F = 0 \\
 &&\qq\qq\qq\qq\qq\qq \forany \text{ interior facets } F\in \Tau_h \big\},
\end{eqnarray*}
and 
\[ 
\NC^h_0=\left\{v\in \NC^h\suchthat 
\,\<v_F, 1\>_F = 0 \forany \text{ boundary facets } F\in \Tau_h \right\},
\]
where
$[[v]]_F$ denotes the jump across the facets $F=\p K\cap \p K'$ for
all $K, K'\in \Tau_h.$

The (parametric) NC Galerkin method for \eqref{eq:diri-weak} is to find $u_h\in \NC^h_0$ such that
\begin{eqnarray}
  a_h(u_h,v_h) =\ell(v_h)\forany v_h\in \NC^h_0,
\end{eqnarray}
where 
\[
  a_h(u,v) = \sum_{K\in\Tau_h}
  (\mbA\grad u, \grad v)_K + (cu,v) \forany u,v\in \NC^h_0+H^1_0(\O).
\]

\subsection{The nonparametric  NC quadrilateral and hexahedral elements}
Recall that finite elements need to contain at least the $P_1$ space in order to have
a full approximation property for the second--order elliptic problems
due to the Bramble--Hilbert lemma.

In this subsection the nonparametric DSSY-type nonconforming
quadrilateral elements will be briefly reviewed. Then
  the $P_1$--NC quadrilateral elements will be reviewed, which are
essentially nonparametric, but which are the lowest
degrees--of--freedom elements as they contain only $P_1$ spaces on
each quadrilateral or hexahedron.

\subsubsection{The  nonparametric DSSY-type nonconforming
  quadrilateral elements }
It was questionable if, for truly quadrilateral triangulations, any
4--DOF DSSY--type nonconforming element can be defined or
not. A DSSY--type element needs to fulfill the Mean Value Property
\eqref{eq:mvp} such that $\hSig_\hK^{(m)}$ and $\hSig_\hK^{(i)}$ generate an identical NC elements. It turns
out that we may not have such a finite element in the class of
parametric finite elements. Instead, it is possible to define such
DSSY--type
element in the class of nonparametric finite elements. Indeed, 
a class of nonparametric {DSSY} nonconforming quadrilateral elements
\cite{jeon-nam-sheen-shim-nonpara} were developed with 4 DOFs
fulfilling the Mean Value Property \eqref{eq:mvp}.

Such nonparametric {DSSY} nonconforming hexahedral elements in three
dimensions with 6 DOFs fulfilling three--dimensional Mean Value
Property will appear elsewhere \cite{sheen-shim-3dnonpara}.

\subsubsection{The $P_1$--NC quadrilateral element}
For general convex quadrilateral triangulation ($d=2$ or $d=3$), it is possible to
define a {\it nonparametric} $P_1$--NC quadrilateral element (see 
Park (PhD Thesis, 2002) and Park--Sheen (SINUM, 2003)
\cite{cpark-thesis,  parksheen-p1quad}). 

\begin{enumerate}
  \item The nonparametric $P_1$--NC quadrilateral ($d=2$) or
    hexahedral ($d=3$) element.
\begin{enumerate}
\item $K,$ any convex quadrilateral or parallelopiped;
\item $P_K = P_1(K);$
\item $\Sig_K=\{\var(\mu_j),j=1,\cdots, d +1,\,\forall \var\in P_K\},$   where 
  $\mu_j$ is any barycenter of the two
opposite facets  $F_{j,\pm}$ for  $j=1,\cdots,d,$
 and $\mu_{d+1}$ is any  other barycenter
of facets  $F_{j,\pm}, j=1,\cdots,d.$
\end{enumerate}
\item
\begin{lemma} \cite{cpark-thesis, parksheen-p1quad}.
    If  $u\in P_1(K)$, then $u(\mu_{1,-}) + u(\mu_{1,+}) =\cdots =
    u(\mu_{d,-}) + u(\mu_{d,+}) $.
    Conversely, if $u_{j,\pm}$ are given values at $\mu_{j,\pm}$, for $1 \leq j
    \leq d$, satisfying $u_{1,-} + u_{1,+} = \cdots= u_{d,-} + u_{d,+}$, 
    then there exists a unique function $u \in P_1(K)$ such that 
    $ u(\mu_{j,\pm}) = u_{j,\pm}$, $1 \leq  j \leq d.$
  \end{lemma}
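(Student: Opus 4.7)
The plan is to first isolate the single geometric fact that underlies both implications: the barycenters of any pair of opposite facets are symmetric about a single, $j$-independent point $c_K$ of $K$. Concretely, I would verify that
\[
\tfrac12(\mu_{j,-}+\mu_{j,+}) = c_K, \qquad j=1,\ldots,d.
\]
For a parallelepiped (more generally a parallelotope) this is immediate: writing the edge vectors from the geometric center $c_K$ as $\mathbf e_1,\ldots,\mathbf e_d$, one has $\mu_{j,\pm}=c_K\pm\mathbf e_j$. For a convex quadrilateral in $\mbR^2$ it is a restatement of Varignon's theorem: the four edge midpoints form a parallelogram whose center is $\tfrac14(v_1+v_2+v_3+v_4)$, irrespective of which pair of opposite edges is chosen.

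The forward direction then reduces to a one-line computation. Writing $u(\mbx)=\mba\cdot\mbx+b$ with $u\in P_1(K)$ and invoking linearity,
\[
u(\mu_{j,-})+u(\mu_{j,+})=\mba\cdot(\mu_{j,-}+\mu_{j,+})+2b=2\mba\cdot c_K+2b=2u(c_K),
\]
which is manifestly independent of $j$.

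For the converse I would argue by a dimension count combined with injectivity. The space $P_1(K)$ has dimension $d+1$, and the subspace of tuples $(u_{j,\pm})_{1\le j\le d}\in\mbR^{2d}$ cut out by the $d-1$ compatibility equations is again of dimension $d+1$. The evaluation map $u\mapsto(u(\mu_{j,\pm}))_{j}$ is linear and, by the forward direction, sends $P_1(K)$ into this compatible subspace; hence it suffices to show that the evaluation map is injective. For that I would exhibit $d+1$ affinely independent points among the $\mu_{j,\pm}$: in the parallelotope case $\mu_{1,-},\mu_{1,+},\mu_{2,+},\ldots,\mu_{d,+}$ have differences $2\mathbf e_1,\,\mathbf e_1+\mathbf e_2,\,\ldots,\,\mathbf e_1+\mathbf e_d$ from $\mu_{1,-}$, linearly independent because $\mathbf e_1,\ldots,\mathbf e_d$ are; in the quadrilateral case $\mu_{1,-},\mu_{1,+},\mu_{2,-}$ are three vertices of the nondegenerate Varignon parallelogram and are therefore affinely independent. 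Since an affine function vanishing at $d+1$ affinely independent points is identically zero, injectivity follows, and surjectivity between equidimensional spaces is then automatic from rank--nullity.

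The only mildly delicate point in this program is the verification of affine independence of the chosen barycenters; once Varignon's theorem (for the convex quadrilateral) and the coordinate formula $\mu_{j,\pm}=c_K\pm\mathbf e_j$ (for the parallelotope) are in hand, everything else is elementary linear algebra.
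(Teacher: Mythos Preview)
Your argument is correct and follows essentially the same path as the paper's proof of the analogous Lemma~\ref{lem:dice-poly}: both reduce the forward direction to the single geometric fact $\tfrac12(\mu_{j,-}+\mu_{j,+})=\mathbf c$ (the content of Lemma~\ref{lem:poly}), and both obtain the converse from unisolvence of $P_1$ on $d{+}1$ affinely independent points. The only differences are cosmetic. For the converse the paper selects the simplex $\{\mathbf c,\mu_{1,-},\ldots,\mu_{d,-}\}$ and argues constructively (recover $u(\mathbf c)$ from the compatibility constraint, then interpolate on that simplex), whereas you select $\{\mu_{1,-},\mu_{1,+},\mu_{2,+},\ldots,\mu_{d,+}\}$ entirely among the facet barycenters and package the argument as injectivity plus a dimension count; both choices of $d{+}1$ points work for the same reason. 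You also treat the genuinely non-parallelogram convex quadrilateral case separately via Varignon's theorem, which is appropriate here since the element in this subsection admits arbitrary convex quadrilaterals, while the paper's own proof is stated only in the parallelotope setting of Section~3.
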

\end{enumerate}

\begin{figure}
\begin{center}
\includegraphics[height=4.0cm,width=5.cm,angle=0,clip=]{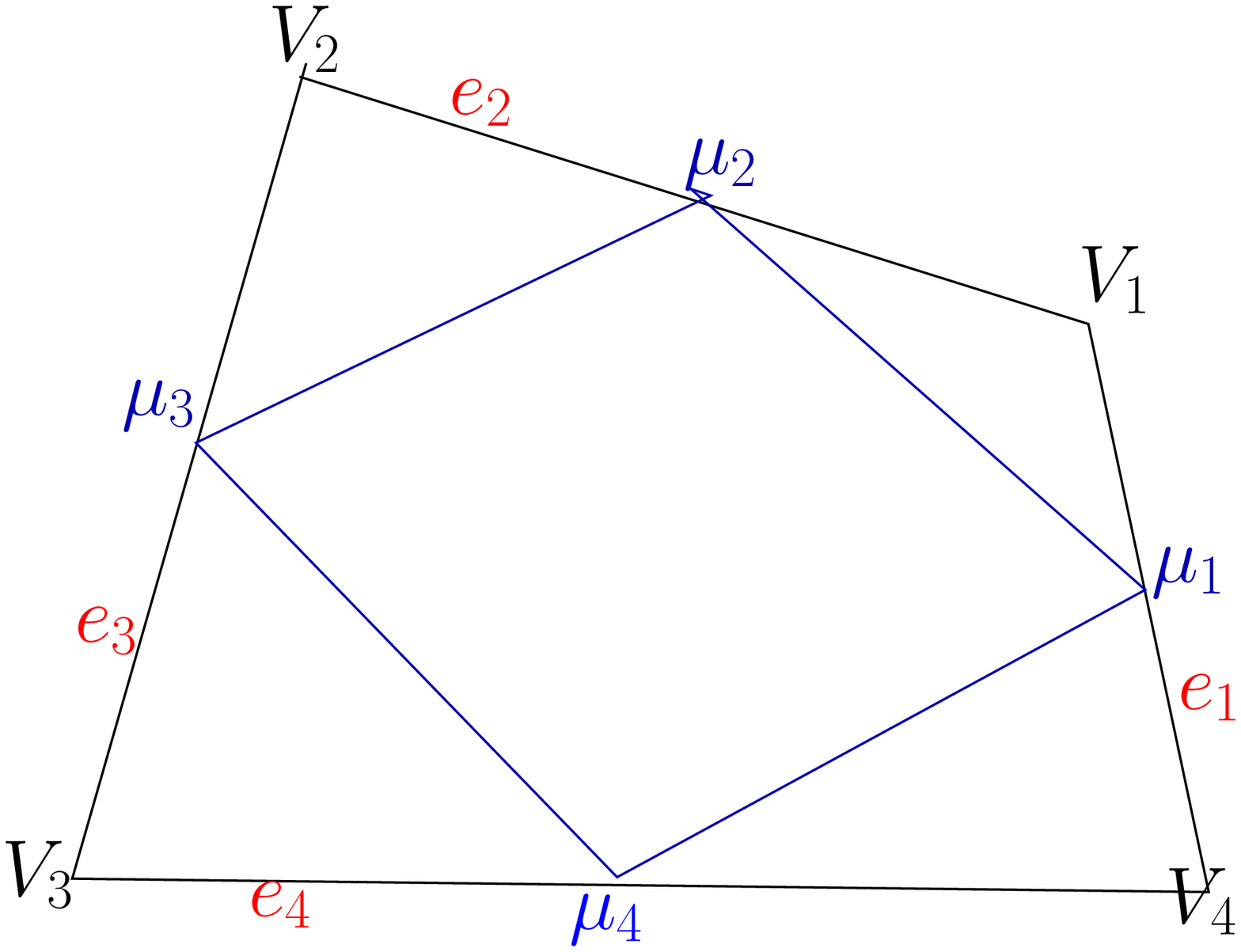}     
\includegraphics[height=4.0cm,width=5.cm,angle=0,clip=]{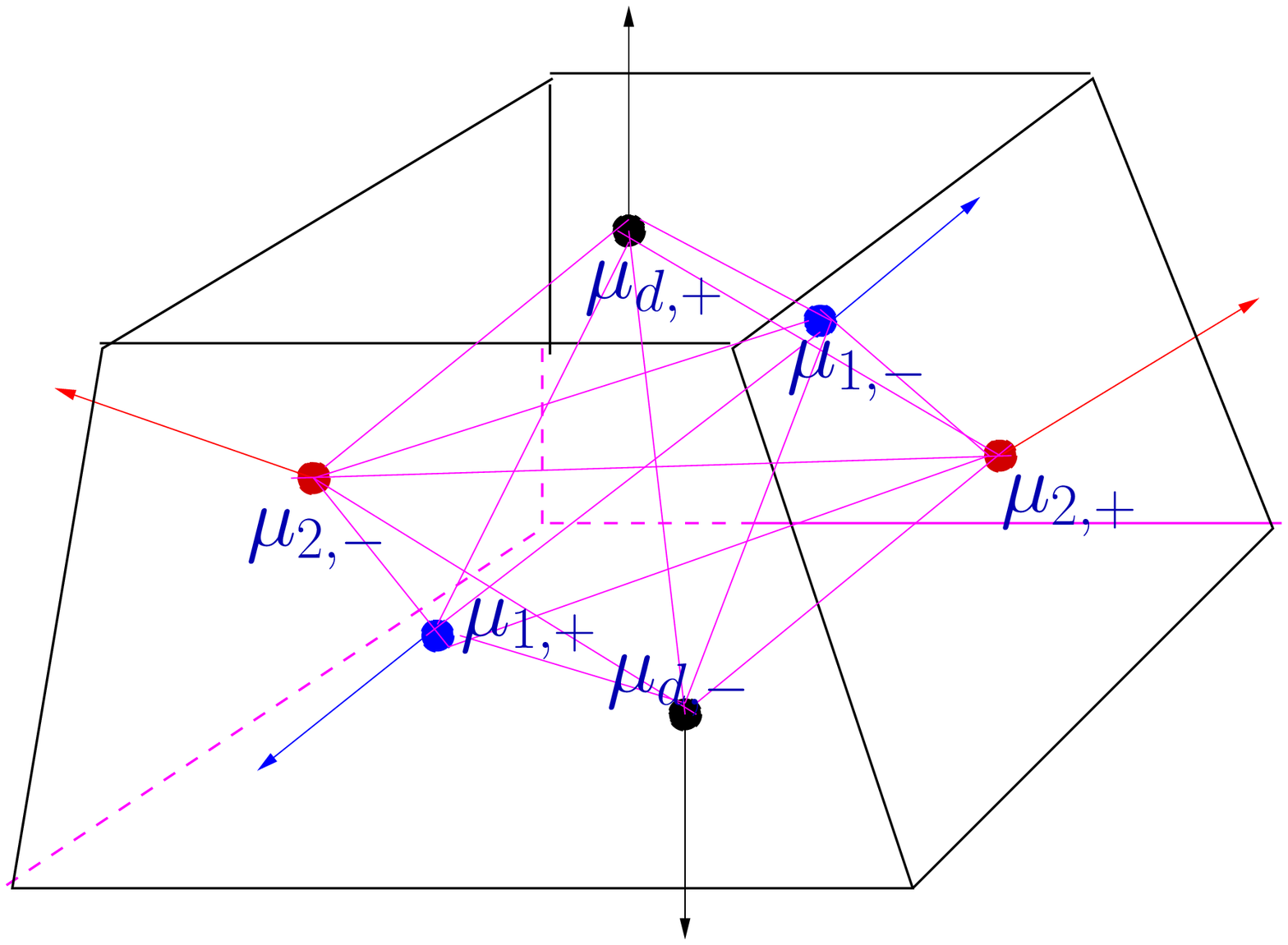}
\caption{{\bf Left.} For $j=1,\cdots,4,$ $\mu_j$ denotes the midpoint of
  edge $e_j$ of any quadrilateral $\conv\{V_1,V_2,V_3,V_4\}.$
 Then $\conv\{\mu_1,\mu_2,\mu_3,\mu_4\}$ is a parallelogram.
{\bf Right.} For $j=1,2,3,\iota=\pm,$ $\mu_{j,\iota}$ denotes the barycenter of
  face $f_{j,\iota}$ of any hexahedron.
 Then $\conv\{\mu_{j,\iota},j=1,2,3,\iota=\pm\}$ forms an octahedron,
 which is a dual of the hexahedron.}
\end{center}
\end{figure}
It is shown in \cite{cpark-thesis,  parksheen-p1quad} that
the above elements are unisolvent and optimal error estimates hold for
the second--order elliptic problems \eqref{eq:diri-weak}.

\section{The $P_1$--nonconforming polyhedral finite element} 
We now extend the $P_1$--NC quadrilateral or hexahedral
element to any dimension $d\ge 2.$

The notion of polytope is the generalization of
quadrilateral to higher dimension, introduced by Coexter \cite{coxeter2012regular}.
See also \cite{brondsted2012introduction, grunbaum1967convex}.
The stream of developing the $P_1$--NC polyhedral finite element basis
functions is a follow--up of that given in \cite{cpark-thesis, parksheen-p1quad}.

For polyhedral set, polytope, parallelotope, and so on, we adopt the
following definitions. Here, we just modify to have those sets to be open
sets instead of closed sets.

\begin{definition}\cite[p.26]{grunbaum1967convex}
A set $K \subset \mathbb R^d$ is called a {\it polyhedral} set provided $K$ is the intersection of a
finite family of open half spaces of $\mathbb R^d.$
\end{definition}

\begin{definition}\cite[p.17, p.31]{grunbaum1967convex}
Let $K$ be a convex subset of $\mathbb R^d.$ A point $x \in \ol K$ is an extreme point of $K$
provided $y, z\in  \ol K, 0 < \lam < 1,$ and $x = \lam y + (1-
\lam)z$ imply $x = y = z.$
The set of all extreme points of $K$ is denoted by $\ext K.$ 
An open convex set $K \subset \mathbb R^d$ is a {\it polytope} provided
$\ext K$ is a finite set.
For a polytope of dimension $d$, we use $d$--polytope.
We use $k$--face if the face is of dimension $k$.
A subset $F \subset \ol K$ is called a {\it face} of a polytope $K$ if either
$F = \emptyset$ or $F = K,$ or if there exists a supporting hyperplane $H$ of $K$
such that $F = \ol K \cap H.$
The set of all faces of $K$ is denoted by $\mathcal F (K)$.
The $0$-- and $1$--faces of $d$--polytope $K$ are the vertices and
edges of $K,$ respectively.
In particular, the $(d-1)$--faces of $d$--polytope $K$ will be called the
{\it facets} of $K.$
For a polytope (or polyhedral set) $K,$  $\ext K$ consists of all
vertices of $K.$
\end{definition}

The following proposition is a well--known result from the above definitions.
\begin{proposition}
A set $K \subset \mathbb R^d$ is a polytope
 if and only if $K$ is a bounded polyhedral set.
\end{proposition}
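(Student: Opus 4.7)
The plan is to reduce the proposition to the classical Minkowski--Weyl theorem, which asserts that a subset of $\mbR^d$ is the convex hull of finitely many points if and only if it is a bounded intersection of finitely many closed half spaces. Since the definitions in the paper are phrased for open sets and open half spaces, the bookkeeping consists of moving between $K$ and its closure $\ol K$, and between open and closed half spaces.

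For the forward direction, assume $K$ is a polytope, so $K$ is open and convex with $\ext K$ finite. Then $\ol K$ is compact and convex, and the Krein--Milman theorem together with finiteness of $\ext K$ gives $\ol K = \conv(\ext K)$, the convex hull of a finite set. Classical Minkowski--Weyl then yields closed half spaces $\ol H_1,\dots,\ol H_m$ with $\ol K=\bigcap_{i=1}^m \ol H_i$. Letting $H_i$ denote the associated open half spaces, I would verify that the interior of $\ol K$ coincides with $\bigcap_{i=1}^m H_i$, using that $K$ is full dimensional as a nonempty open subset of $\mbR^d$. Since $K$ is itself open and convex with closure $\ol K$, it equals this interior, and therefore $K=\bigcap_{i=1}^m H_i$ is a bounded polyhedral set.

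For the converse, let $K=\bigcap_{i=1}^m H_i$ be a bounded polyhedral set. Then $K$ is open and convex as a finite intersection of open convex sets, and $\ol K$ is compact and convex. A standard argument that every boundary point of $K$ is a limit of interior points gives $\ol K=\bigcap_{i=1}^m \ol H_i$, a bounded intersection of finitely many closed half spaces. Minkowski--Weyl then represents $\ol K=\conv S$ for a finite set $S\subset\mbR^d$, and because every extreme point of a convex hull of finitely many points must lie in that generating set, $\ext K\subset S$. Thus $\ext K$ is finite and $K$ is a polytope.

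The main technical obstacle is precisely the translation between the open and closed representations: checking that $\operatorname{int}\bigl(\bigcap_i \ol H_i\bigr)=\bigcap_i H_i$ and dually that $\ol{\bigcap_i H_i}=\bigcap_i \ol H_i$. Both identities hinge on full dimensionality of the polyhedron in $\mbR^d$ and are elementary continuity arguments once that is established; they are the only steps that are not formally contained in the classical Minkowski--Weyl theorem, which I would invoke as a black box.
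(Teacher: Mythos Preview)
The paper does not prove this proposition; it simply records it as ``a well--known result from the above definitions'' with an implicit appeal to Gr\"unbaum. There is therefore no argument in the paper to compare yours against.

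Your reduction to the Minkowski--Weyl theorem is the standard route, and the open/closed bookkeeping you outline for the converse direction is sound. There is, however, a genuine gap in the forward direction: you pass from ``$K$ is a polytope'' to ``$\ol K$ is compact'' with no justification, and under the paper's definition of polytope---an open convex set whose extreme set is finite---this step simply fails. An open half space in $\mbR^d$ is open and convex, and its closure has no extreme points at all, so it qualifies as a polytope in the sense written here; the same is true of an open slab $\{0<x_1<1\}$ or an open orthant. None of these is bounded, so the forward implication is actually false as the definitions stand, and Krein--Milman cannot be invoked to rescue it. The defect is really in the paper's transcription of Gr\"unbaum (where a polytope is the convex hull of finitely many points, hence compact by construction); once ``polytope'' is read in that intended sense, your argument goes through without change.
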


 \begin{definition} \cite{brondsted2012introduction}
 We say
 $\sum_{j=1}^d \lam_j\mbx_j$
 is a {\it convex combination}
 of $\mbx_j\in\mbR^d, j=  1,\cdots,d,$
   denoted by
  \begin{eqnarray}\label{eq:csum}
       \csum{j=1}{d} \lam_j\mbx_j.
  \end{eqnarray}
 if $\sum_{j=1}^d\lam_j = 1$ and $\lam_j\ge 0\,\,\forall j.$
  The vectors $\mbx_j\in\mbR^d, j=  1,\cdots,d,$ are said to be
  {\it affinely independent} if
  \[
    \sum_{j=1}^d \lam_j\mbx_j =\mbzero\quad\text{with }
      \sum_{j=1}^d\lam_j=0 \text{ implies } \lam_j = 0\forany j.
    \]
  \end{definition}

For affinely independent vectors $\mbx_j, j = 1, \cdots, k,$
a {\it $k$--parallelotope} $K$
is a bounded polytope which can be
represented by
\begin{eqnarray}
  \mbx = \mba + \sum_{j=1}^k \lam_j\mbx_j,\,
  0\le \lam_j\le 1\forany j.
\end{eqnarray}
In the meanwhile a bounded $k$--polytope can be represented by
  \begin{eqnarray*}
  \mbx = \csum{j=1}{2k} \lam_j\mbx_j.
  \end{eqnarray*}
  with suitable $\mbx_j\in \ext{K}, j = 1, \cdots, 2k,$
if it is combinatorially equivalent to a $k$--cube.
Two polytopes are said to be {\it combinatorially equivalent} if there is a
one--to--one correspondence between the set of all faces of $P$ and that
of all faces of $Q$ with incidence--relation preserved.

If a $k$--polytope is combinatorially equivalent to $k$--cube, $(-1,1)^k,$
$K$ is assumed to have $2d$ boundaries which are flat
$(d-1)$--faces combinatorially equivalent to the $(d-1)$--dimensional cube $(-1,1)^{d-1}.$
In particular, denote by $(F_{j,-},F_{j,+}), j = 1, \cdots, d,$
the pairs of opposite  $(d-1)$-faces.
For each vertex $V_j,$ there are $d$ edges which meet at the vertex.
For $j=1,\cdots,2d,$ denote by $\mu_{j,\pm}$ the barycenter of facet
$F_{j,\pm}.$

The convex hull of the barycenters of facets of $d$--polytope $K$
forms the dual of $K$, and their diagonals
intersect at one point and are bisected by this point. Indeed, we have
the following lemma.
\begin{lemma}\label{lem:poly}
Let $K\in \mathbb R^d$ be a $d$--parallelotope
which is combinatorially equivalent to the $d$--dimensional cube $(-1,1)^d,$
with $2^d$ vertices: $V_{j}, j = 1,\cdots, 2^d.$
Assume that $K$ has $d$ pairs of opposite boundaries
$F_{j,\pm}, j= 1,\cdots,d,$ which are flat $(d-1)$--faces combinatorially equivalent
to the $(d-1)$--dimensional cube $(-1,1)^{d-1}.$
  Let  $\{\mu_{j,+},\mu_{j,-},j=1,\cdots,d\}$
be  the barycenters of boundaries
of $F_{j,\pm}.$ \\
Then $\conv\{\mu_{j,+},\mu_{j,-},j=1,\cdots,d\}$ forms a
$d$--parallelotope, which is the dual of $K,$ and the midpoint of $\mu_{j,+}$ and $\mu_{j,-}$
coincides for $j=1,\cdots,d.$
\end{lemma}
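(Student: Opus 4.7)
The plan is to reduce the lemma to elementary coordinate computations via the parametric representation of a parallelotope. Because $K$ is a $d$-parallelotope, I may choose affinely independent edge vectors $\mbx_1,\ldots,\mbx_d\in\mbR^d$ and a base point $\mba$ so that
\[
K = \mba + \Bigl\{\textstyle\sum_{j=1}^d \lambda_j \mbx_j : 0\le \lambda_j\le 1\Bigr\}.
\]
The combinatorial equivalence with $(-1,1)^d$ then identifies, consistently with the $d$-cube labeling, each facet $F_{j,\pm}$ with the slice $\{\lambda_j = (1\pm 1)/2\}$; in particular every $F_{j,\pm}$ is itself a $(d-1)$-parallelotope with edge vectors $\{\mbx_k\}_{k\ne j}$.

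Next, I would compute the facet barycenters directly. Since the barycenter of a parallelotope equals its base point plus one half the sum of its edge vectors (which is immediate from averaging the $2^{d-1}$ vertices obtained by taking each $\epsilon_k\in\{0,1\}$), one obtains
\[
\mu_{j,-} = \mba + \tfrac12\sum_{k\ne j}\mbx_k,\qquad
\mu_{j,+} = \mba + \mbx_j + \tfrac12\sum_{k\ne j}\mbx_k.
\]
The midpoint of each pair is therefore
\[
\tfrac12(\mu_{j,-}+\mu_{j,+}) = \mba + \tfrac12\sum_{k=1}^d \mbx_k,
\]
which is manifestly independent of $j$ and equals the centroid $\mbc$ of $K$; this proves the bisection assertion.

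To obtain the dual structure, I would rewrite the previous identities as $\mu_{j,\pm} = \mbc \pm \tfrac12\mbx_j$. The $2d$ barycenters are thus the images of $\pm\tfrac12\hve_j$ (the standard cross-polytope vertices) under the affine map $\mbx\mapsto \mbc + \sum_j x_j \mbx_j$, which shows that $\conv\{\mu_{j,\pm}\}$ is affinely equivalent to the $d$-cross-polytope and naturally interpreted as the dual of $K$ (for $d=2$ this is a parallelogram, recovering the left panel of the figure; for $d=3$ it is an octahedron, as in the right panel).

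The main obstacle is not computational but the conceptual one of extracting the right parametric form from the hypotheses: one must justify that the assumed combinatorial equivalence actually forces the facet pairing $(F_{j,-},F_{j,+})$ to correspond to a single coordinate $\lambda_j$ and that each such facet inherits the $(d-1)$-parallelotope structure needed for the barycenter formula. Once this identification is in place, the two assertions follow at once from the single formula $\mu_{j,\pm} = \mbc \pm \tfrac12\mbx_j$.
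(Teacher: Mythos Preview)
Your argument is correct but takes a different route from the paper's. The paper argues purely combinatorially: since the opposite facets $F_{j,+}$ and $F_{j,-}$ partition the $2^d$ vertices of $K$ into two groups of $2^{d-1}$, one has
\[
\sum_{k=1}^{2^{d-1}} V_k^{(j,+)} + \sum_{k=1}^{2^{d-1}} V_k^{(j,-)} = \sum_{k=1}^{2^d} V_k
\]
independently of $j$; dividing by $2^{d-1}$ shows that $\mu_{j,+}+\mu_{j,-}$ is the same for every $j$, and the proof ends there. Your parametric approach instead produces the explicit formula $\mu_{j,\pm}=\mbc\pm\tfrac12\mbx_j$. The paper's argument is shorter and needs only the vertex--partition property of opposite facets (plus the fact that the barycenter of a parallelotope facet equals the average of its vertices). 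Your formula, on the other hand, buys something the paper's proof does not touch: it exhibits $\conv\{\mu_{j,\pm}\}$ as an affine image of the standard cross--polytope, thereby justifying the ``dual of $K$'' claim directly, whereas the paper's proof establishes only the midpoint coincidence and leaves the dual structure implicit.
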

\begin{proof}
For $j= 1,\cdots,d,$ and $\iota=\pm,$ let $V_k^{(j,\iota)},k=1,\cdots,2^{d-1},$
denote the vertices
of $F_{j,\pm}.$ Then notice that
$$\frac12\left[\sum_{k=1}^{2^{d-1}} V_k^{(j,+)} + \sum_{k=1}^{2^{d-1}} V_k^{(j,-)}\right]
= \frac12\sum_{k=1}^{2^d} V_k$$ 
which implies that the midpoint of $\mu_{j,+}$ and $\mu_{j,-}$
coincides
for every $j=1,\cdots,d.$ 
This proves the lemma.
\end{proof}

The \lemref{lem:poly} enables to generalize the
$P_1$--NC quadrilateral or hexahedral element to any $d\ge 2$ dimension.

From now on, {\it we assume that a $k$--polytope is combinatorially equivalent
to a $k$--cube, for $0<k\le d$.} We are ready to generalize the $P_1$--NC quadrilateral element
to any high dimension as follows.

\begin{definition}\label{fem:polytope}
  Define the $d$--dimensional $P_1$--NC polyhedral element
  as follows:
\begin{subeqnarray*}
&\text{(i)}& K, \text{$d$--parallelotope};\\
&\text{(ii)}& P_K = P_1(K);\\
&\text{(iii)}& \Sig_K=\{\var(\mu_j),j=1,\cdots, d +1,\,\forall \var\in P_K\},  \text{ where }
  \mu_j \mbox{ is any barycenter of the two}\\
&&\qquad\text{opposite facets }  F_{j,\pm}\text{ for }  j=1,\cdots,d,
  \text{ and } \mu_{d+1} \text{ is any
    other barycenter}\\
  &&\qquad \text{of facets } F_{j,\pm}, j=1,\cdots,d.
\end{subeqnarray*}
\end{definition}
Now, we have the following lemma.
\begin{lemma}\label{lem:dice-poly}
    If  $u\in P_1(K)$, then the following $d-1$ constraints hold:
$u(\mu_{1,-}) + u(\mu_{1,+}) =\cdots=u(\mu_{j,-}) + u(\mu_{j,+}) =\cdots =u(\mu_{d,-}) + u(\mu_{d,+}).$
    Conversely, if $u_{j,\pm}$ are given values at $\mu_{j,\pm}$, for $1 \leq j
    \leq d$, satisfying 
$u_{1,-} + u_{1,+}=\cdots = u_{j,-} + u_{j,+} =\cdots=u_{d,-} + u_{d,+},$
    then there exists a unique function $u \in P_1(K)$ such that 
    $ u(\mu_{j,\pm}) = u_{j,\pm}$, $1 \leq  j \leq d.$
  \end{lemma}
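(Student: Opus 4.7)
The plan is to derive both directions from the key fact, already established in \lemref{lem:poly}, that the segments $[\mu_{j,-},\mu_{j,+}]$ all share a common midpoint; call it $c$. The necessity part is then immediate from the affineness of $P_1$ functions, and the sufficiency part reduces to a dimension count plus the observation that the vectors $\mu_{j,+}-\mu_{j,-}$ are exactly (up to scaling by $1/2$) the edge vectors of the $d$--parallelotope $K$, and are therefore linearly independent.

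\textbf{Step 1 (Necessity).} For any $u\in P_1(K)$, $u$ is affine, so
\[
  u\!\left(\tfrac{\mu_{j,+}+\mu_{j,-}}{2}\right)=\tfrac{u(\mu_{j,+})+u(\mu_{j,-})}{2}.
\]
By \lemref{lem:poly} the left--hand side equals $u(c)$ for every $j=1,\ldots,d$. Therefore
$u(\mu_{j,-})+u(\mu_{j,+})=2u(c)$ is independent of $j$, which gives the required $d-1$ constraints.

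\textbf{Step 2 (Sufficiency and uniqueness via dimension count).} We have $\dim P_1(K)=d+1$, and the prescribed data consist of $2d$ values subject to $d-1$ linear constraints, giving $2d-(d-1)=d+1$ independent data. Hence existence and uniqueness of the interpolating $u\in P_1(K)$ are equivalent; it suffices to prove uniqueness. Assume $u\in P_1(K)$ with $u(\mu_{j,\pm})=0$ for all $j$. By Step~1 applied to such a $u$ (with value $0$ on each pair), we get $u(c)=0$. Moreover, for each $j$,
\[
  \mu_{j,+}-c=\tfrac12(\mu_{j,+}-\mu_{j,-}).
\]

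\textbf{Step 3 (Linear independence of the diagonals).} Since $K$ is a $d$--parallelotope, write $K=\mba+\{\sum_{j=1}^d t_j\mbx_j:0\le t_j\le 1\}$ for affinely independent generators $\mbx_1,\ldots,\mbx_d$. The opposite facets $F_{j,\pm}$ correspond to $t_j=1$ and $t_j=0$, and a direct computation of their barycenters yields $\mu_{j,+}-\mu_{j,-}=\mbx_j$. Thus the $d$ vectors $\mu_{j,+}-c$ span $\mathbb{R}^d$, so $c,\mu_{1,+},\ldots,\mu_{d,+}$ are $d+1$ affinely independent points of $K$. A $P_1$ function vanishing at $d+1$ affinely independent points is identically zero, which proves uniqueness and hence the lemma.

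The only subtlety is Step~3: one must exploit that $K$ is a genuine parallelotope (not merely combinatorially equivalent to the cube) to conclude that the diagonal vectors of the dual configuration are linearly independent. Everything else is either a reference to \lemref{lem:poly} or a straightforward bookkeeping of dimensions.
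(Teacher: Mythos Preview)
Your proof is correct and follows essentially the same route as the paper: both directions hinge on \lemref{lem:poly} (the common midpoint $\mathbf c$), and the converse reduces to the affine independence of $\mathbf c,\mu_{1,\pm},\dots,\mu_{d,\pm}$ (for one choice of sign). The only differences are cosmetic: the paper argues the converse by direct construction on the $d$--simplex $\conv\{\mathbf c,\mu_{1,-},\dots,\mu_{d,-}\}$ (asserting without proof that this is a simplex), whereas you recast it as a dimension count plus injectivity and explicitly verify in Step~3 that $\mu_{j,+}-\mu_{j,-}=\mathbf x_j$ are the parallelotope's edge vectors and hence linearly independent---a detail the paper leaves implicit.
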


  \begin{proof} 
Due to \lemref{lem:poly}, we have $\mu_{j,-} + \mu_{j,+} = 2\mbc,
\forany j=1,\cdots,d,$ and the
linearity of $\phi$ implies
$\phi(\mu_{j,-}) + \phi(\mu_{j,+}) = 2\phi(\mbc), \forany j=1,\cdots,d.$

To show the converse suppose that $u_{j,\pm}$ are given values at $\mu_{j,\pm}$, for $1 \leq j
    \leq d$, satisfying 
$u_{1,-} + u_{1,+}=\cdots = u_{j,-} + u_{j,+} =\cdots=u_{d,-} + u_{d,+},$
     Without loss of generality, we may assume that $\mu_j =
\mu_{j,-}$ is chosen from the pair of barycenters $\mu_{j,-}$ and
$\mu_{j,+}$ for all $j=1,\cdots,d.$ 
Since $\conv\{\mbc, \mu_j,\, j=1,\cdots, d\}$ forms a
$d$--simplex, any function $\phi\in P_1(\conv\{\mbc, \mu_j,\,
j=1,\cdots, d\})$ is uniquely determined by the $d+1$ values at 
$\mbc, \mu_j,\,j=1,\cdots, d.$
From the constraint and \lemref{lem:poly}, the value at $\mbc$ can be determined by any
additional
value at any $\mu_{j_0,+}.$ This shows the claim of the converse holds.
  \end{proof}

Owing to \lemref{lem:poly} and \lemref{lem:dice-poly}, the following
unisolvency holds.
\begin{theorem}\label{thm:unisolv}
  The $d$--dimensional $P_1$--NC polyhedral element defined in Definition \ref{fem:polytope} is unisolvent.
\end{theorem}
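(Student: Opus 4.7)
The plan is to reduce the claim to \lemref{lem:dice-poly}, which already encodes the essential linear--algebraic content. Since $\dim P_1(K)=d+1$ matches $|\Sig_K|=d+1$, the evaluation map $\varphi\mapsto (\varphi(\mu_j))_{j=1}^{d+1}$ from $P_1(K)$ to $\mbR^{d+1}$ will be an isomorphism as soon as it is either injective or surjective, so it is enough to establish one of the two.

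I would prove surjectivity constructively. By relabelling I may assume $\mu_j=\mu_{j,-}$ for $j=1,\ldots,d$; by Definition \ref{fem:polytope} the extra node must then be the partner $\mu_{d+1}=\mu_{j_0,+}$ of some $\mu_{j_0,-}$. Given arbitrary target values $a_1,\ldots,a_{d+1}$, the plan is to set $u_{j,-}:=a_j$ for $j=1,\ldots,d$ and $u_{j_0,+}:=a_{d+1}$, and then fill in the remaining partners by $u_{j,+}:=(a_{j_0}+a_{d+1})-a_j$ for $j\ne j_0$. By construction all $d$ pairwise sums $u_{j,-}+u_{j,+}$ share the common value $a_{j_0}+a_{d+1}$, so the $2d$ numbers satisfy the hypothesis of the converse half of \lemref{lem:dice-poly}. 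That lemma then produces a unique $u\in P_1(K)$ with $u(\mu_{j,\pm})=u_{j,\pm}$, whose restriction to $\mu_1,\ldots,\mu_{d+1}$ is the desired interpolant.

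The only genuinely delicate point is seeing why the prescription of $\mu_{d+1}$ in Definition \ref{fem:polytope} is sharp. Via \lemref{lem:poly}, $\mu_{j_0,-}+\mu_{j_0,+}=2\mbc$, so prescribing $\varphi$ at both $\mu_{j_0,\pm}$ is precisely what pins down $\varphi(\mbc)=\frac12(a_{j_0}+a_{d+1})$. This single extra scalar, combined with the $d$ values $\varphi(\mu_{j,-})$, supplies exactly the $d+1$ data needed to determine $\varphi$ on the $d$--simplex $\conv\{\mbc,\mu_{1,-},\ldots,\mu_{d,-}\}$ (as was already observed inside the proof of \lemref{lem:dice-poly}), and hence on all of $\mbR^d$. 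With the two lemmas in hand the theorem then follows with no further work.
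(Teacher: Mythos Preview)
Your argument is correct and follows the same route the paper takes: the paper simply records that unisolvency ``holds owing to \lemref{lem:poly} and \lemref{lem:dice-poly}'' without writing out a separate proof, and your proposal is precisely a careful unpacking of that one--line claim via the dimension count and the converse half of \lemref{lem:dice-poly}. Nothing further is needed.
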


\subsection{Global  $P_1$--NC polyhedral finite element
  spaces}

Let $(\Tau_h)_{0<h<1}$ denote a family of quasiregular triangulations
of $\O$ into $d$--parallelotopes $K_j$'s where $\diam(K_j)\leq h
\forany K_j\in \Tau_h$ with all their $k$--faces are combinatorially
equivalent to $k$--cube for all $k\le d-1.$
Set
\[
\NC_K = P_1(K)\forany K\in \Tau_h.
\]
The above \lemref{lem:dice-poly} enables to define the $d$--dimensional
  $P_1$--NC polyhedral element spaces, which are nonparametric.
Indeed, the global $P_1$--NC polyhedral finite element spaces are
defined as follows:
\begin{subeqnarray*} 
\NC^h&=&\big\{v\in L^2(\O)\suchthat v|_{K}\in\NC_K \forany K\in \Tau_h;
  \,\<\jump{v}{F}, 1\>_F = 0\\
&&\qquad\qquad \forany \text{ interior } (d-1) \text{--faces (or facets) } F\in \Tau_h \big\},
\end{subeqnarray*}
and 
\[ 
\NC^h_0=\left\{v\in \NC^h\suchthat 
\,\<v_f, 1\>_F = 0 \forany \text{ boundary facets } F\in \Tau_h \right\},
\]
where $\jump{v}{F}$
denotes the jump across the facets $F=\p K\cap \p K'$ for
all $K, K'\in \Tau_h.$

\subsection{Basis  and its dimension}
Following the idea in \cite{cpark-thesis,
  parksheen-p1quad} for two and three dimensions,
denote by $\cM_h$ the set of all
barycenters of facets in $\Tau_h.$
Let $\{V_j\in\Tau_h ,j = 1,\cdots, N_V^i\}$ be the set of all 
interior vertices in $\Tau_h.$ Then for $j=1,\cdots, N_V^i,$
let $K^{(j)}_{l}, l = 1,\cdots,N_j$ form the set of
all $d$--parallelotopes in $\Tau_h$ which share the common vertex $V_j.$
Denote by $\cM(V_j)$ the set of all barycenters of the facets of those
$K^{(j)}_{l}, l = 1,\cdots,N_j$ sharing the common vertex $V_j.$
Now, define $\phi_j\in \NC^h_0$ by
\begin{eqnarray*}
\phi_j(\mu) = \begin{cases} 1,&\quad \mu \in \cM(V_j), \\
  0, & \quad \mu \in \cM_h \setminus \cM(V_j).
\end{cases}
\end{eqnarray*}
Then the following theorem holds (see \cite{cpark-thesis,
  parksheen-p1quad} for two and three dimensions):
\begin{theorem} $\phi_j, j = 1,\cdots, N_V^i$ are linearly
  independent. Moreover, we have
\begin{eqnarray*}
\NC_0^h = \Span\{\phi_j, j = 1,\cdots, N_V^i\}; \quad \dim(\NC_0^h) = N_V^i.
\end{eqnarray*}
\end{theorem}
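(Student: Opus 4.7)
My plan is to prove the theorem in three stages, extending the two- and three-dimensional arguments of Park and Sheen \cite{cpark-thesis, parksheen-p1quad}. First, I would verify that each $\phi_j$ lies in $\NC_0^h$. Fix an interior vertex $V_j$ and an element $K\in\Tau_h$. If $V_j\notin\ext K$, all prescribed facet-barycenter values vanish and $\phi_j|_K\equiv 0$. If $V_j\in\ext K$, the key observation is that opposite facets of a $d$-parallelotope partition its $2^d$ vertices into two disjoint halves of size $2^{d-1}$; hence each opposite pair $(F_{k,-},F_{k,+})$ contains $V_j$ on exactly one side, giving $\phi_j(\mu_{k,-})+\phi_j(\mu_{k,+})=1$ for every $k$. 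This is precisely the compatibility demanded by \lemref{lem:dice-poly}, so $\phi_j|_K\in P_1(K)$ is well-defined. Across an interior facet $F$, the value $\phi_j(\mu_F)$ is intrinsic to $F$ (it equals $1$ iff $V_j\in\ext F$), so the two one-sided traces coincide at $\mu_F$; since the jump is affine on $F$ with zero barycenter value, $\langle\jump{\phi_j}{F},1\rangle_F=0$. On a boundary facet, no vertex is interior, so $\phi_j(\mu_F)=0$.

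Next, I would handle linear independence and spanning jointly via an inductive construction. Order the interior vertices $V_{j_1},\ldots,V_{j_{N_V^i}}$ by nondecreasing graph distance to $\partial\Omega$. For each $V_{j_m}$, I would select an incident facet $F_m$ whose other interior vertices all lie in $\{V_{j_1},\ldots,V_{j_{m-1}}\}$; the existence of such $F_m$ follows from the opposite-facet pairing of \lemref{lem:poly} combined with the layered structure of $\Tau_h$. Given $v\in\NC_0^h$, recursively define
\[
c_{j_m}:=v(\mu_{F_m})-\sum_{V_{j_k}\in\ext F_m,\ k<m}c_{j_k},
\]
so that $w:=v-\sum_j c_j\phi_j$ vanishes at each $\mu_{F_m}$. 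Combining elementwise \lemref{lem:dice-poly} with the boundary-facet conditions $w(\mu_F)=0$ then propagates the vanishing of $w$ to every remaining facet barycenter, whence $w\equiv 0$. This establishes spanning; specializing to $v\equiv 0$ forces $c_{j_m}=0$ for every $m$, which is the linear independence, and the dimension claim follows.

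The principal obstacle is the combinatorial step of producing the anchor facets $F_m$. For $d=2,3$ this is transparent since facets have only $2$ or $4$ vertices and one walks outward along an edge toward $\partial\Omega$. For $d\ge 4$, each facet carries $2^{d-1}$ vertices, and one must argue---exploiting \lemref{lem:poly} together with the quasiregularity of $\Tau_h$---that among all facets in the star of $V_{j_m}$ some $F_m$ has all its other interior vertices already processed. A secondary verification is that the elementwise propagation via \lemref{lem:dice-poly}, once enough facet values have been anchored, fixes the remaining facet values: this is plausible because each element contributes only $d+1$ independent facet DOFs, which are matched by the anchor facets intersecting the element together with its boundary facets.
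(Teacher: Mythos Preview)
The paper does not actually prove this theorem: it states the result and points to \cite{cpark-thesis, parksheen-p1quad} for the two-- and three--dimensional cases, so there is no in--paper argument to compare against, and your proposal already contains more detail than the paper supplies.

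Your first stage is correct. The key observation---that a vertex of a $d$--parallelotope lies on exactly one member of each opposite pair of facets---is precisely the compatibility condition required by \lemref{lem:dice-poly}, so $\phi_j|_K\in P_1(K)$ is well defined; and since the value of $\phi_j$ at $\mu_F$ depends only on whether $V_j\in\ext F$, the interior jump and boundary conditions follow exactly as you say.

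Your second stage, however, has the gap you yourself flag, and it is a genuine one. The existence of the anchor facets $F_m$ is not established: for $d\ge 3$ each facet carries $2^{d-1}$ vertices, and the assertion that the interior vertices can be ordered so that every $V_{j_m}$ lies on a facet whose remaining $2^{d-1}-1$ vertices are all boundary or already processed is a nontrivial combinatorial claim about parallelotope meshes. Ordering by graph distance to $\partial\Omega$ does not obviously produce such facets once $d\ge 3$, and quasiregularity alone does not yield the ``layered'' structure you invoke. The subsequent propagation step---forcing $w$ to vanish at all remaining barycenters from the anchored ones and the boundary data via \lemref{lem:dice-poly}---is likewise only sketched: each element contributes $d-1$ linear relations among $2d$ facet values, and a genuine connectivity or counting argument is needed to show these suffice. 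As written, the proposal is a plausible outline rather than a proof; to close it you would need either a rigorous mechanism for selecting the $F_m$ in general dimension, or an independent route such as a direct dimension count of $\NC_0^h$ (via the number of facet barycenters minus the elementwise constraints from \lemref{lem:dice-poly} and the boundary conditions) matched against the cardinality $N_V^i$.
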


\subsection{Local and global Interpolation operators}
Let $K$ be a $d$--parallelotope combinatorially equivalent to $[-1,1]^d$ with facets and
barycenters $F_{j}$ and $\mu_j,$ respectively, for $j = 1,\cdots, 2d.$
Denote by $V^{(j)}_k, k = 1,\cdots, 2^{d-1},$ the vertices of $F_j, j =
1,\cdots, 2d.$
Then the interpolation operator $\cI_K: C^0(\overline K)\to P_1(K)$ is
defined as follows:
if $u\in C^0(\overline K),$
due to \lemref{lem:dice-poly} one can define $\cI_K u\in P_1(K)$ such that
\begin{eqnarray*}
  (\cI_K u)(\mu_j) = \frac1{2^{d-1}}\sum_{k=1}^{2^{d-1}}
  u(V^{(j)}_k),\, j = 1,\cdots,2d.
\end{eqnarray*}

The global interpolation operator $\cI_h: C^0(\overline \O)\to \NC_h$
is then defined element by element such that
\begin{eqnarray*}
\cI_h\mid_K = \cI_K\forany K\in \Tau_h.
\end{eqnarray*}
Since linear polynomials remain unchanged by $\cI_h,$
the Bramble--Hilbert lemma (which holds for high dimensional
spaces) leads to the following estimate:
\begin{eqnarray}\label{eq:interpol}
\|\cI_h - u\| + h|\cI_h - u|_{1,h} \le C h^2 |u|_2\forany u \in H^2(\O),
\end{eqnarray}
where $|\cdot|_{1,h}$ designates the broken semi-norm defined by
$|v|_{1,h} = \sqrt{\sum_{K\in\Tau_h} \|\grad v\|_{0,K}^2}$ for all
$v\in H^1(\O)+\NC_h.$

\subsection{The $P_1$--NC  polyhedral Galerkin methods}

Then the NC Galerkin method for \eqref{eq:diri-weak} is to find $u_h\in \NC^h_0$ such that
\begin{eqnarray}\label{eq:nc-ell}
  a_h(u_h,v_h) =\ell(v_h)\forany v_h\in \NC_0^h,
\end{eqnarray}
where 
\[
  a_h(u,v) = \sum_{K\in\Tau_h}
  (\mbA\grad u, \grad v)_K + (cu,v)\forany u,v\in \NC_0^h+H_0^1(\O),
\]
and $\ell: \NC_0^h+H_0^1(\O) \to \mbR$ is as in \eqref{eq:a-def}.

\begin{theorem}\label{thm:conv}
  Let $u\in H^1_0(\O)\cap H^2(\O)$ and $u_h\in \NC_0^h$ be the solutions
  of \eqref{eq:diri-weak} and \eqref{eq:nc-ell}, respectively.
Then the following optimal error estimates hold for
the second--order elliptic problems:
\begin{subeqnarray}
\| u_h - u \|_{1,h} &\le& C h  | u |_{2},
\\
\| u_h - u \|_0 &\le& C h^2  | u |_{2}.
\end{subeqnarray}
\end{theorem}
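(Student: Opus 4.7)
The plan is to prove the two estimates via the standard abstract framework for nonconforming methods: Strang's second lemma for the energy estimate, then an Aubin--Nitsche duality argument for the $L^2$ estimate. First, I would observe that the broken bilinear form $a_h(\cdot,\cdot)$ is uniformly coercive and continuous on $\NC_0^h + H_0^1(\O)$ in the broken $H^1$--norm, using the uniform positive-definiteness of $\mbA$ and the nonnegativity of $c$. Strang's second lemma then gives
\begin{eqnarray*}
\|u-u_h\|_{1,h} \le C\Bigl(\inf_{v_h\in\NC_0^h}\|u-v_h\|_{1,h} + \sup_{0\ne w_h\in\NC_0^h}\frac{|a_h(u,w_h)-\ell(w_h)|}{\|w_h\|_{1,h}}\Bigr).
\end{eqnarray*}
The approximation term is bounded directly by choosing $v_h=\cI_h u$ and invoking the interpolation estimate \eqref{eq:interpol}, yielding $Ch|u|_2$.

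Next, for the consistency term I would integrate by parts element by element. Because $u$ satisfies \eqref{eq:ell-diri} strongly,
\begin{eqnarray*}
a_h(u,w_h)-\ell(w_h) = \sum_{K\in\Tau_h}\int_{\p K}(\mbA\grad u\cdot\bn_K)\,w_h\,ds = \sum_{F\in\Tau_h}\int_F (\mbA\grad u\cdot\bn_F)\,\jump{w_h}{F}\,ds,
\end{eqnarray*}
where for boundary facets $\jump{w_h}{F}$ is read as the trace of $w_h$. The crucial property of $\NC_0^h$ is that $\int_F\jump{w_h}{F}\,ds=0$ on every facet, so on each $F$ I may subtract the facet mean $\overline{\mbA\grad u\cdot\bn_F}_F$ without changing the sum. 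A Cauchy--Schwarz inequality on each $F$, followed by a scaled trace inequality and a Bramble--Hilbert argument on the two elements sharing $F$, produces
\begin{eqnarray*}
\Bigl|\int_F(\mbA\grad u\cdot\bn_F - \overline{\mbA\grad u\cdot\bn_F}_F)\jump{w_h}{F}\,ds\Bigr| \le C h_K |u|_{2,\omega_F}\,|w_h|_{1,h,\omega_F},
\end{eqnarray*}
where $\omega_F$ is the element patch around $F$; summing and using Cauchy--Schwarz over facets yields $|a_h(u,w_h)-\ell(w_h)|\le Ch|u|_2\|w_h\|_{1,h}$, completing the $O(h)$ bound in the energy norm.

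For the $L^2$ estimate, let $z\in H_0^1(\O)\cap H^2(\O)$ solve the adjoint problem with right-hand side $u-u_h$, satisfying the elliptic regularity bound $\|z\|_2\le C\|u-u_h\|_0$. Writing
\begin{eqnarray*}
\|u-u_h\|_0^2 = a_h(u-u_h,\,z-\cI_h z) + a_h(u-u_h,\,\cI_h z),
\end{eqnarray*}
the first term is controlled by the energy estimate together with the interpolation bound \eqref{eq:interpol} applied to $z$, contributing $Ch^2|u|_2\|u-u_h\|_0$. For the second term, Galerkin orthogonality is unavailable owing to nonconformity, so I would rewrite it as $[a_h(u-u_h,\cI_h z) - a_h(u-u_h,z)] + a_h(u-u_h,z)$, controlling the first bracket by the argument above and the last term by the primal consistency estimate applied symmetrically to $z$. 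Both pieces are of order $h^2|u|_2\|u-u_h\|_0$; dividing out $\|u-u_h\|_0$ gives the claimed $L^2$ rate.

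The main obstacle is the consistency analysis, namely obtaining the trace and Bramble--Hilbert estimates uniformly over the family of $d$--parallelotopes. Since each $K\in\Tau_h$ is, by \lemref{lem:poly} and the definition in \defref{fem:polytope}, affinely equivalent to a reference parallelotope under a map whose Jacobian is controlled by the quasiregularity assumption, the scaling constants can be made uniform across $\Tau_h$ in any fixed dimension $d$; the zero-mean-jump property of $\NC_0^h$ is precisely the patch-test condition that absorbs the constant in the subtracted facet mean and delivers the optimal $O(h)$ consistency bound.
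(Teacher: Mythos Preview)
Your proposal is correct and follows exactly the route the paper indicates: the paper's own proof is the single sentence ``the theorem follows from the usual argument by using the second Strang lemma and the interpolation estimate \eqref{eq:interpol},'' and you have supplied precisely that usual argument in detail, including the consistency bound via the zero--mean--jump property of $\NC_0^h$ and the Aubin--Nitsche duality for the $L^2$ estimate. The only cosmetic slip is that the identity $\|u-u_h\|_0^2 = a_h(u-u_h,\,z-\cI_h z) + a_h(u-u_h,\,\cI_h z)$ should read $a_h(u-u_h,z)$ on the left (with the dual consistency error then accounting for the difference to $\|u-u_h\|_0^2$), but you immediately handle that term correctly, so the argument goes through.
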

\begin{proof}
The theorem follows from the usual argument by using the second Strang
lemma
and the interpolation estimate \eqref{eq:interpol}.
\end{proof}

\begin{acknowledgement}
The research was supported in part by National Research Foundation of
Korea (NRF--2017R1A2B3012506 and NRF--2015M3C4A7065662).
The author wishes to express his thanks to anonymous referees whose
critical comments lead to improve the manuscript substantially.
\end{acknowledgement}

\bibliographystyle{spmpsci}

\end{document}